\newcommand{\COMMENT}[1]{}
\newtheorem{definition}{Definition}
\newtheorem{theorem}[definition]{Theorem}
\newtheorem{lemma}[definition]{Lemma}
 \newcommand{\R}{\mathbb R}
 \newcommand{\X}{\mathcal X}  
 \newcommand{\Y}{\mathcal Y}    
 \newcommand{\x}{\mathbf x}  
 \newcommand{\y}{\mathbf y} 
 \newcommand{\vv}{\mathbf v}                    
\title{Entropy versions of additive inequalities}
\author{Alberto Espuny D\'iaz}
\address{School of Mathematics, University of Birmingham}
\author{Oriol Serra}
\address{Department of Mathematics, Universitat Polit\`ecnica de Catalunya}
\email{AXE673@bham.ac.uk, oriol.serra@upc.edu}
\thanks{The research leading to these results was partially supported by the Spanish MECD through grant 2015/COLAB/00069 (A.~Espuny D\'iaz) and projects MTM2014-54745-P and MDM-2014-0445 (O.~Serra).}
\date{\today}
\begin{document}

\begin{abstract}
The connection between inequalities in additive combinatorics and analogous versions in terms of the entropy of random variables has been extensively explored  over the past few years.
This paper extends a device introduced by Ruzsa in his seminal work introducing this correspondence.
This extension provides a toolbox for establishing the equivalence between sumset inequalities and their entropic versions.
It supplies simpler proofs of known results and opens a path for obtaining new ones.
\end{abstract}
\maketitle
\thispagestyle{empty}

\section{Introduction}\label{section1}

In recent years, several authors realized that there exist certain analogies between many of the cardinality inequalities in additive combinatorics that have been developed over the years and some entropy inequalities.
These analogies appear, for instance, with many important sumset inequalities such as the Pl\"unnecke-Ruzsa inequalities, or with traditional entropy results such as Shearer's inequality.
In the past decade, several papers exploring these analogies have appeared and many insightful results have been produced. 
The seminal work of \citet{RuzsaEntropy} on this topic was extended by \citet{BalisterBollobas}, \citet{Madimanmutual}, \citet{MadimanGeneral}, \citet{Madimanjointentropies} or \citet{TaoEntropy}, among many others.
All these papers present different techniques with which the analogy between sumset inequalities and entropy inequalities can be studied.
These techniques are used to obtain many new results, especially in the form of entropy inequalities.

In this note we concern ourselves with entropies of discrete random variables.
Let $X$ be a discrete random variable taking values $x_1,x_2,\ldots,x_n$ with probabilities $p_1,p_2,\ldots,p_n$, respectively.
The Shannon entropy of $X$ is defined as 
\[
\mathbf{H}(X)\coloneqq\sum_{i=1}^np_i\log\frac{1}{p_i}.
\] 
The definition is analogous if $X$ takes countably many values.
This is a concave function, and Jensen's inequality gives
\begin{equation}\label{equa:jensen}
\mathbf{H}(X)\leq\log n,
\end{equation}
where $n$ is the cardinality of the range of $X$.
Moreover, equality holds if and only if $X$ is uniformly distributed over its range.
This is the key property which allows one to translate entropy inequalities to combinatorial ones.
From this perspective, entropy inequalities can be seen as generalizations of their combinatorial counterparts.
One of the first examples in the literature is the translation of the classical inequality of Han,
\[
(n-1)\mathbf{H}(X_1,\ldots,X_n)\leq\sum_{i=1}^n\mathbf{H}(X_1,\ldots,X_{i-1},X_{i+1},\ldots,X_n),
\]
which provides a simple direct proof of the inequality of Loomis and Whitney
\[
|A|^{n-1}\leq\prod_{i=1}^n|A_i|,
\]
where $A\subset E_1\times \cdots \times E_n$ and  $A_i$ denotes the projection of $A$ to the $i$-th coordinate hyperplane.
This example opened the path to obtaining combinatorial inequalities from entropy ones. 

\citet{RuzsaEntropy} introduced a device to walk the path backwards and obtain entropy inequalities from combinatorial ones, by establishing in fact the equivalence between the two versions.
In his paper, he restricted the device to linear functions of two variables in abelian groups. Ruzsa used this technique to prove the equivalence between Han's inequality and the Loomis and Whitney theorem mentioned above.  
This same technique was later used by \citet{BalisterBollobas} to prove the equivalence between Shearer's inequality and the Uniform Covering inequality.

The main goal of this note is to extend the device of Ruzsa to arbitrary functions.
By doing so we obtain a more flexible tool.
Our hope is that this will allow to give new combinatorial proofs of entropy inequalities, and also to obtain new ones.

Given a function $f\colon\mathcal{X}\to\mathcal{Y}$, we denote by $f^k$ the function $f^k\colon\mathcal{X}^k\to \mathcal{Y}^k$ induced on the $k$-fold cartesian power $\mathcal{X}^k$, namely, $f^k(x_1,\ldots,x_k)=(f(x_1),\ldots,f(x_k))$ for $x_1,\ldots,x_k\in\mathcal{X}$.
The main result of this note is the following one.

\begin{lemma}\label{teor:cardtoentro}
	Let\/ $f,f_1,\ldots,f_n$ be  functions defined over a set\/ $\mathcal{X}$.
	Let\/ $\alpha_1,\ldots,\alpha_n$ be real numbers.
	If for all positive\/ $k$ and every finite set\/ $A\subseteq\mathcal{X}^k$ we have that
	\[|f^k(A)|\leq\prod_{i=1}^n\left|f_i^k(A)\right|^{\alpha_i},\]
	then, for every discrete random variable\/ $X$ taking values in\/ $\mathcal{X}$, the entropy of\/ $f(X)$ satisfies
	\[\mathbf{H}(f(X))\leq\sum_{i=1}^n\alpha_i\mathbf{H}(f_i(X))\]
	whenever\/ $\mathbf{H}(f_i(X))$ is finite for every\/ $i\in[n]$.
\end{lemma}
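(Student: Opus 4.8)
The plan is to generalise Ruzsa's tensorisation device: I will pass to high tensor powers of a single random variable and exploit the asymptotic correspondence $\frac{1}{k}\log|g^k(A_k)|\to\mathbf{H}(g(X))$, valid simultaneously for every $g\in\{f,f_1,\dots,f_n\}$ once $A_k$ is a suitable ``typical'' subset of $\mathcal{X}^k$. Applying the hypothesis to $A=A_k$, taking logarithms, dividing by $k$ and letting $k\to\infty$ then converts the cardinality inequality directly into the entropy inequality.

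First I would reduce to the case where $X$ has finite support. Fix an increasing sequence of finite sets $\mathcal{X}_m\uparrow\mathcal{X}$ with $\Pr[X\in\mathcal{X}_m]\to1$ and let $X^{(m)}$ be $X$ conditioned on the event $E_m=\{X\in\mathcal{X}_m\}$. For each $g\in\{f,f_1,\dots,f_n\}$, conditioning reduces entropy, so $\Pr[E_m]\,\mathbf{H}(g(X^{(m)}))\le\mathbf{H}(g(X)\mid\1_{E_m})\le\mathbf{H}(g(X))$, which gives $\limsup_m\mathbf{H}(g(X^{(m)}))\le\mathbf{H}(g(X))$; on the other hand the probability mass functions of $g(X^{(m)})$ converge pointwise to those of $g(X)$, so lower semicontinuity of Shannon entropy yields $\liminf_m\mathbf{H}(g(X^{(m)}))\ge\mathbf{H}(g(X))$. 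Hence $\mathbf{H}(g(X^{(m)}))\to\mathbf{H}(g(X))$ for every $g$. The finiteness hypothesis $\mathbf{H}(f_i(X))<\infty$ is exactly what makes the limits $\alpha_i\mathbf{H}(f_i(X^{(m)}))\to\alpha_i\mathbf{H}(f_i(X))$ well defined and finite, which is crucial when $\alpha_i<0$; the value $\mathbf{H}(f(X))$ is allowed to be $+\infty$, and the final chain of inequalities forces it to be finite a posteriori. Thus it suffices to prove the lemma for finitely supported $X$ and then pass to the limit in $m$.

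For finitely supported $X$, let $X_1,X_2,\dots$ be independent copies and, for each $k$, let $A_k\subseteq\mathcal{X}^k$ be the set of strongly typical sequences, i.e.\ those whose empirical distribution lies within $\epsilon_k$ of the law of $X$, with $\epsilon_k\to0$ slowly. The core claim is that $\frac{1}{k}\log|g^k(A_k)|\to\mathbf{H}(g(X))$ for every $g$. For the lower bound, note that $(X_1,\dots,X_k)\in A_k$ with probability tending to $1$, so $g^k(A_k)$ carries $(g(X))^{\otimes k}$-probability tending to $1$; since any set of probability close to $1$ under an independent product must have cardinality at least $2^{k(\mathbf{H}(g(X))-o(1))}$ (the converse part of the asymptotic equipartition property), this gives $\frac{1}{k}\log|g^k(A_k)|\ge\mathbf{H}(g(X))-o(1)$. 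For the upper bound I would use the method of types: every $\mathbf{x}\in A_k$ has empirical distribution near that of $X$, so its image $g^k(\mathbf{x})$ has empirical distribution near that of $g(X)$, the push-forward of empirical measures being continuous because $g$ acts on a fixed finite alphabet; hence $g^k(A_k)$ lies in a cluster of type classes around the law of $g(X)$. Bounding each type class by $2^{k\mathbf{H}(Q)}$, using that the number of types is polynomial in $k$, and invoking continuity of entropy on the simplex, yields $\frac{1}{k}\log|g^k(A_k)|\le\mathbf{H}(g(X))+o(1)$. Combining the two bounds proves the claim.

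With the claim in hand, I apply the hypothesis to the finite set $A=A_k$, namely $|f^k(A_k)|\le\prod_{i=1}^n|f_i^k(A_k)|^{\alpha_i}$, take logarithms and divide by $k$ to get $\frac{1}{k}\log|f^k(A_k)|\le\sum_{i=1}^n\alpha_i\,\frac{1}{k}\log|f_i^k(A_k)|$. Because the claim gives genuine two-sided convergence of each normalised logarithm, every term passes to its limit irrespective of the sign of $\alpha_i$, and letting $k\to\infty$ produces $\mathbf{H}(f(X))\le\sum_{i=1}^n\alpha_i\mathbf{H}(f_i(X))$ for finitely supported $X$. Finally, applying this to $X^{(m)}$ and letting $m\to\infty$ via the convergences from the first step establishes the general case. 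The main obstacle is securing the \emph{two-sided} control $\frac{1}{k}\log|g^k(A_k)|\to\mathbf{H}(g(X))$ with a single set $A_k$ that serves $f$ and all the $f_i$ at once: the upper bound needs the type/continuity argument while the lower bound needs the converse equipartition estimate, and having both is what allows negative coefficients $\alpha_i$. The secondary technical point is the truncation and limit argument, where finiteness of the $\mathbf{H}(f_i(X))$ is used.
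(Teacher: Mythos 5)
Your proof is correct, and it shares the overall tensor-power skeleton of the paper's argument (apply the hypothesis to a well-chosen $A\subseteq\mathcal{X}^k$, convert log-cardinalities to entropies, divide by $k$, let $k\to\infty$, then reach general $X$ by a limit), but the set you tensorise over and the counting lemma behind it are genuinely different. The paper restricts to $X$ with finitely many values of \emph{rational} probability, takes $A$ to be the exact type class $R_k(X)$ (the Ruzsa set), and proves the exact identity $f^k(R_k(X))=R_k(f(X))$ (\cref{prop:Ruzsadevice}); the two-sided cardinality estimate is then a single application of Stirling to a multinomial coefficient. You instead take $A_k$ to be an $\epsilon_k$-typical set and establish the two-sided estimate $\tfrac{1}{k}\log|g^k(A_k)|\to\mathbf{H}(g(X))$ by combining the converse of the asymptotic equipartition property (lower bound) with a method-of-types count (upper bound). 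Your route needs no rationality assumption, hence one fewer layer of approximation, at the cost of two separate counting arguments where the paper has one exact identity; both routes correctly secure the two-sided control that makes negative coefficients $\alpha_i$ harmless. Your truncation step is in fact more careful than the paper's \cref{lema:ruzdevice2}, which asserts that entropies converge along a sequence of finitely supported approximants without justification: your combination of ``conditioning reduces entropy'' with lower semicontinuity of entropy under pointwise convergence of the probability mass functions supplies exactly the argument that is missing there.
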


\Cref{teor:cardtoentro} is complemented by the following partial converse, which can be obtained from the concavity of the entropy function.

\begin{lemma}\label{teor:entrotocard}
	Let\/ $f,f_1,\ldots,f_n$ be any functions defined over a set\/ $\mathcal{X}$.
	Let\/ $\alpha_1,\ldots,\alpha_n$ be positive real numbers.
	If the inequality
	\[\mathbf{H}(f(X))\leq\sum_{i=1}^n\alpha_i\mathbf{H}(f_i(X))\]
	holds for every random variable $X$ with suport in a finite set $A\subseteq{\mathcal X}$, then we have that
	\[|f(A)|\leq\prod_{i=1}^n\left|f_i(A)\right|^{\alpha_i}.\]
\end{lemma}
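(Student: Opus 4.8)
The plan is to deduce the cardinality inequality by feeding a carefully chosen random variable into the entropic hypothesis and then converting entropies back into cardinalities via the bound \eqref{equa:jensen}. Taking logarithms, the desired conclusion $|f(A)|\le\prod_{i=1}^n|f_i(A)|^{\alpha_i}$ is equivalent to $\log|f(A)|\le\sum_{i=1}^n\alpha_i\log|f_i(A)|$. So it suffices to exhibit a single random variable $X$ supported in $A$ for which $\mathbf{H}(f(X))$ is as large as $\log|f(A)|$ while each $\mathbf{H}(f_i(X))$ is at most $\log|f_i(A)|$; applying the hypothesis to such an $X$ will then yield exactly this inequality.

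To build $X$, I would choose, for each value $y\in f(A)$, a single preimage $a_y\in A$ with $f(a_y)=y$, and let $X$ be uniformly distributed on the set $\{a_y:y\in f(A)\}\subseteq A$. With this choice $f(X)$ is uniformly distributed over $f(A)$, so the equality case of \eqref{equa:jensen} gives $\mathbf{H}(f(X))=\log|f(A)|$. On the other hand, for each $i$ the variable $f_i(X)$ takes values in $f_i(\{a_y:y\in f(A)\})\subseteq f_i(A)$, a set of size at most $|f_i(A)|$, so \eqref{equa:jensen} applied directly yields $\mathbf{H}(f_i(X))\le\log|f_i(A)|$.

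Combining these estimates with the hypothesis applied to this particular $X$ gives $\log|f(A)|=\mathbf{H}(f(X))\le\sum_{i=1}^n\alpha_i\mathbf{H}(f_i(X))\le\sum_{i=1}^n\alpha_i\log|f_i(A)|$, and exponentiating finishes the proof. The two points that require care are the following. First, one must force $f(X)$ to be \emph{exactly} uniform: taking $X$ uniform on all of $A$ would only give $\mathbf{H}(f(X))\le\log|f(A)|$, the wrong direction, since the fibres of $f$ may have different sizes; choosing one representative per value is precisely what repairs this. Second, replacing $\mathbf{H}(f_i(X))$ by its upper bound $\log|f_i(A)|$ inside the weighted sum preserves the inequality only because the $\alpha_i$ are assumed positive, which is exactly the reason this converse is only a partial one.
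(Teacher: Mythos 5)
Your proof is correct and is essentially identical to the paper's: both construct $X$ uniform on a set of representatives $A^*$ (one preimage per element of $f(A)$), so that $f(X)$ is exactly uniform on $f(A)$, and then combine the entropic hypothesis with \eqref{equa:jensen} and the positivity of the $\alpha_i$. Your remark about why a uniform distribution on all of $A$ would fail is a nice addition, but the argument itself matches the paper's.
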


We will first present the technique developed by Ruzsa in \cref{section2}, as well as its generalization.
We also provide proofs for \cref{teor:cardtoentro} and \cref{teor:entrotocard}.
\Cref{teor:cardtoentro} is actually shown through two technical lemmas, \cref{lema:ruzdevice} for random variables taking a finite number of values with rational probabilities, and an extension to discrete random variables in \cref{lema:ruzdevice2}.
In \cref{section35} we present an application of our technique to prove a new result which generalises the example of the equivalence between Han's inequality and the Loomis and Whitney inequality to fractional coverings. 



\section{The Ruzsa device}\label{section2}

Obtaining cardinality inequalities analogous to entropy inequalities, as in \cref{teor:entrotocard}, is based on the fact that uniform random variables capture the information of  their range sets. In order to invert the analogy and obtain entropy inequalities from cardinality ones, \citet{RuzsaEntropy} proposed a construction of sets which captures the probability distribution of a given random variable. 

Assume we are given a random variable $X$ defined over a set $\mathcal{X}$ that takes a finite number of values, each of them with a rational probability.
We can then construct a set $R_k(X)\subseteq\mathcal{X}^k$, to which we will refer as the $k$-\textit{Ruzsa set} of $X$, for infinitely many values of $k$.
The vectors in $R_k(X)$ have the property that, if a coordinate in one of them  is chosen uniformly at random, then we are choosing an element $x\in\mathcal{X}$ with the same probability as the random variable $X$ does (we may say that the ``density'' of $x$ in the vector equals the probability that it is the outcome of $X$).

\begin{definition}
Let\/ $X$ be a random variable taking values\/ $\{x_1,\ldots,x_n\}\subseteq \X$, each with probability\/ $\displaystyle p_i=\frac{q_i}{r_i}$ for some\/ $q_i,r_i\in\mathbb{N}$, and let\/ $k$ be a common multiple of\/ $r_1,\ldots,r_k$.
For any vector\/ $\vv=(v_1,\ldots,v_k)\in \X^k$ and each $i\in [n]$, let\/ $J_i(\vv)\coloneqq\{j\in [k] :v_j=x_i\}$.
The\/ $k$-\emph{Ruzsa set} of\/ $X$ is the set of vectors
\[
R_k(X)\coloneqq\{\vv\in  \X^k:  |J_i(\vv)|=p_ik\ \ \forall\ i\in[n]\}.
\]
We call an integer\/ $k$ \emph{suitable} for the random variable\/ $X$, or $X$\emph{-suitable}, if it is a common multiple of\/ $r_1,\ldots,r_k$.
\end{definition}

With this definition we have that
\[
|R_k(X)|=\binom{k}{p_1k,\ldots ,p_nk},
\]
and, by using  Stirling's formula, one can readily check that
\begin{equation}\label{equa:Acard}
\log|R_k(X)|=k\,\mathbf{H}(X)+O(\log k)\text{.}
\end{equation}

This is the construction Ruzsa used to prove the equivalence between Han's inequality and the Loomis and Whitney theorem.
The main idea behind the proof came from observing that one can build a set from a random variable, a different set from its projection onto a certain subspace, and that the resulting set in the latter is precisely the projection of the first one.
In other words, the following diagram is commutative (here, $\pi_i$ stands for the projection onto the $i$-th coordinate hyperplane):
\[
    \begin{tikzcd}[row sep=large]
        X \arrow[r,"R_k"] \arrow[d,"\pi_i"] 
        & R_k(X) \arrow[d,"\pi_i^k"]
        \\
        \pi_i(X) \arrow[r,"R_k"]
        & R_k(\pi_i(X)) 
    \end{tikzcd}
\]
Using this fact, one can separately compute the sizes of $R_k(X)$ and its projections in terms of the entropy of the random variables through \eqref{equa:Acard}.
If a relationship between the sizes of the set and its projections is known, a relationship between the entropies of the variable and its projections follows (by letting $k$ tend to infinity).

Ruzsa took the idea behind these commutative diagrams a bit further.
Instead of considering simple projections, he took linear functions defined over two variables, and again proved that constructing the Ruzsa set and applying linear functions commute.
He used this fact to prove an equivalence theorem between inequalities of cardinalities of sumsets along graphs and entropy inequalities.
In this paper we go even further, and see that the above diagram is always   commutative, no matter which function $f$ is considered.

We say that a random variable taking values in a set $\mathcal{X}$ is an \textit{$\mathcal{X}$-random variable}.
Let $X$ be a discrete $\mathcal{X}$-random variable that takes finitely many values, each of them with rational probability.
Let $k\in\mathbb{N}$ be suitable for $X$.
Let $R_k(X)\subseteq\mathcal{X}^k$ be $X$'s $k$-Ruzsa set.
Let $f\colon\mathcal{X}\to\mathcal{Y}$ be any function.
Let us denote by $f^k\colon\mathcal{X}^k\to\mathcal{Y}^k$ the function induced by $f$ on the $k$-fold cartesian power $\X^k$, namely $f^k(x_1,\ldots,x_k)=(f(x_1),\ldots,f(x_k))$ for all $x_1,\ldots,x_k\in\mathcal{X}$.
The diagram now looks as follows:
\[
    \begin{tikzcd}[row sep=large]
        X \arrow[r,"R_k"] \arrow[d,"f"] 
        & R_k(X) \arrow[d,"f^k"]
        \\
        f(X) \arrow[r,"R_k"]
        & f^k(R_k(X)) 
    \end{tikzcd}
\]

\begin{lemma}\label{prop:Ruzsadevice}
Let\/ $X$ be an\/ $\mathcal{X}$-random variable taking finitely many values, each of them with a rational probability, and let\/ $f\colon\mathcal{X}\to\mathcal{Y}$ be a function defined over\/ $\mathcal{X}$. Then, for each\/ $X$-suitable\/ $k$,  
\begin{equation}\label{eq:Ruzsadevice}
R_k (f(X))=f^k(R_k(X)).
\end{equation}
\end{lemma}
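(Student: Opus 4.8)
The plan is to establish the set equality \eqref{eq:Ruzsadevice} by proving the two inclusions separately. First I would fix notation for the random variable $f(X)$. Since $X$ takes the values $x_1,\dots,x_n$ with rational probabilities, $f(X)$ takes finitely many values, namely the distinct elements of $\{f(x_1),\dots,f(x_n)\}$, which I denote $y_1,\dots,y_m$ with $m\le n$. For each $\ell\in[m]$, the probability that $f(X)=y_\ell$ equals $P_\ell\coloneqq\sum_{i\,:\,f(x_i)=y_\ell}p_i$. Because $k$ is $X$-suitable, every $p_ik$ is an integer, and hence so is each $P_\ell k$; thus $k$ is also suitable for $f(X)$ and $R_k(f(X))$ is well defined. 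Explicitly, $R_k(f(X))=\{\mathbf{w}\in\Y^k:|\{j\in[k]:w_j=y_\ell\}|=P_\ell k\ \forall\ell\in[m]\}$.

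For the inclusion $f^k(R_k(X))\subseteq R_k(f(X))$, I would take any $\vv\in R_k(X)$ and count the coordinates of $f^k(\vv)$ equal to a fixed $y_\ell$. A coordinate $j$ satisfies $f(v_j)=y_\ell$ precisely when $v_j=x_i$ for some $i$ with $f(x_i)=y_\ell$, so the number of such coordinates is $\sum_{i\,:\,f(x_i)=y_\ell}|J_i(\vv)|=\sum_{i\,:\,f(x_i)=y_\ell}p_ik=P_\ell k$. Hence $f^k(\vv)$ has the correct coordinate counts and lies in $R_k(f(X))$, which settles this direction.

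The reverse inclusion $R_k(f(X))\subseteq f^k(R_k(X))$ is the heart of the argument and the step I expect to require the most care. Given $\mathbf{w}\in R_k(f(X))$, I must exhibit a preimage $\vv\in R_k(X)$ under $f^k$; that is, I must lift each coordinate $w_j$ to some $x_i$ in its fibre (an $x_i$ with $f(x_i)=w_j$) in such a way that each $x_i$ is used exactly $p_ik$ times. The key observation is a balance of supply and demand within each fibre: for each $\ell$, the set of positions $S_\ell\coloneqq\{j\in[k]:w_j=y_\ell\}$ has size exactly $P_\ell k=\sum_{i\,:\,f(x_i)=y_\ell}p_ik$, so the number of positions carrying the value $y_\ell$ equals precisely the total demand coming from the values $x_i$ in its fibre. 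I would therefore partition each $S_\ell$ into blocks of sizes $p_ik$, one for each $i$ with $f(x_i)=y_\ell$, and set $v_j=x_i$ throughout the corresponding block. The resulting vector $\vv$ satisfies $|J_i(\vv)|=p_ik$ for every $i$, so $\vv\in R_k(X)$, and by construction $f(v_j)=y_\ell=w_j$ for all $j\in S_\ell$, whence $f^k(\vv)=\mathbf{w}$.

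Combining the two inclusions yields \eqref{eq:Ruzsadevice}. The only delicate point is the potential non-injectivity of $f$, and it is handled entirely by the exact matching of the cardinality $P_\ell k$ of each $S_\ell$ with the combined demand $\sum_{i\,:\,f(x_i)=y_\ell}p_ik$ of the values in the fibre over $y_\ell$; this is exactly what makes the required lift available.
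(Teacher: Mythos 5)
Your proposal is correct and follows essentially the same route as the paper's own proof: both directions are handled by the same coordinate-counting argument for the forward inclusion and the same fibre-wise partition of the index sets $S_\ell$ (the paper's $J_i(\mathbf{y})$) into blocks of size $p_ik$ for the lifting step. The only cosmetic difference is that you justify $X$-suitability of $k$ for $f(X)$ directly via integrality of $P_\ell k$ rather than via divisibility of least common multiples, which is if anything slightly cleaner.
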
  

\begin{proof}
Assume that $X$ takes values $\{x_1,\ldots,x_n\}$, each with probability $\displaystyle p_i=\frac{q_i}{r_i}$ for some $q_i,r_i\in\mathbb{N}$, and construct the $k$-Ruzsa set $R_k(X)$ for some suitable $k$. 

Let $\{y_1,\ldots ,y_m\}$ be the range of $Y=f(X)$.
Every value $y_i$ is taken by $Y$ with a rational probability $p'_i=q'_i/r'_i\coloneqq\sum_{x\in f^{-1}(y_i)} \Pr (X=x)$, where $\mathrm{lcm}(r'_1,\ldots ,r'_m)$ divides $\operatorname{lcm}(r_1,\ldots ,r_n)$, so that $k$ is suitable for $Y$ and we can construct the $k$-Ruzsa set of $Y$. 

The image by $f^k$ of a vector $\mathbf{x}\in R_k(X)$  is a vector in $\Y^k$ in which every $y_i\in f(\mathcal{X})$ appears precisely $\displaystyle k\!\!\!\!\sum_{x\in f^{-1}(y_i)}\!\!\!\!\Pr(X=x)=kp'_i$ times.
Hence $f^k(\x)\in R_k(Y)$ and $f^k(R_k(X))\subseteq R_k(f(X))$.

Reciprocally, let $\mathbf{y}$ be a vector in $R_k(f(X))$.
We now construct a vector $\mathbf{x}\in\mathcal{X}^k$ such that $f^k(\mathbf{x})=\mathbf{y}$.
For each $i\in [m]$ let $J_i(\mathbf{y})\coloneqq\{\ell\in [k] :\mathbf{y}_\ell=y_i\}$ (note that $|J_i(\mathbf{y})|=kp'_i$).
For each $y_i$ let $f^{-1}(y_i)=\{x_1^i,\ldots,x_{n_i}^i\}$.
Partition $J_i(\mathbf{y})$ into $n_i$ sets $J_1^i,\ldots,J_{n_i}^i$ such that $|J_j^i|=k\Pr(X=x_j^i)$ for all $j\in[n_i]$ (note that this can be done as $p'_i=\sum_{x\in f^{-1}(y_i)} \Pr (X=x)$ and $kp_\ell\in\mathbb{N}$ for all $\ell\in[n]$).
Construct $\mathbf{x}$ by adding, for each $i\in[m]$ and $j\in[n_i]$, $x_j^i$ to the coordinates whose indices are in $J_j^i$.
For this vector we have $f^k({\mathbf x})=\y$.
This shows that $R_k(f(X))\subseteq f^k(R_k(X))$.

Hence, we have $f^k(R_k(X))=R_k(f(X))$.
\end{proof}

Once we have shown that the  diagram is commutative, we can provide a proof of \cref{teor:cardtoentro}.
We first consider random variables with finite support taking their values with rational probabilities.

\begin{lemma}\label{lema:ruzdevice}
Let\/ $f,f_1,\ldots,f_n$ be any functions defined over a set\/ $\mathcal{X}$.
Let\/ $\alpha_1,\ldots,\alpha_n$ be real numbers.
Let\/ $X$ be a random variable with support in\/ $\mathcal{X}$ that takes a finite number of values, each of them with a rational probability, and let\/ $\mathit{Suit}(X)\subseteq\mathbb{N}$ be the set of all\/ $X$-suitable integers.
If we have that
\begin{equation}\label{equa:lema4cond}
|f^{ k}(R_{k}(X))|\leq\prod_{i=1}^n\left|f_i^{ k}(R_{ k}(X))\right|^{\alpha_i}\ \ \ \forall\ k\in\mathit{Suit}(X)\text{,}
\end{equation}
then the entropy of\/ $f(X)$ satisfies
\[\mathbf{H}(f(X))\leq\sum_{i=1}^n\alpha_i\mathbf{H}(f_i(X))\text{.}\]
\end{lemma}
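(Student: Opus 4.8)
The plan is to translate the cardinality hypothesis \eqref{equa:lema4cond} into a statement about the Ruzsa sets of the \emph{images} $f(X), f_1(X), \ldots, f_n(X)$, and then to pass from cardinalities to entropies via the Stirling estimate \eqref{equa:Acard}, letting $k$ grow along $\mathit{Suit}(X)$. First I would invoke the commutativity established in \cref{prop:Ruzsadevice}: for every $k\in\mathit{Suit}(X)$ we have $f^k(R_k(X))=R_k(f(X))$ and, for each $i\in[n]$, $f_i^k(R_k(X))=R_k(f_i(X))$. I note that $k$ is automatically suitable for each image variable, since each value of $f(X)$ (respectively $f_i(X)$) is attained with a probability that is a sum of rational probabilities of $X$, whose denominator divides that of $X$; this is precisely what is verified at the start of the proof of \cref{prop:Ruzsadevice}. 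Substituting these identities, the hypothesis \eqref{equa:lema4cond} becomes
\[
|R_k(f(X))|\leq\prod_{i=1}^n\left|R_k(f_i(X))\right|^{\alpha_i}\qquad\forall\,k\in\mathit{Suit}(X).
\]

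Next I would take logarithms on both sides, obtaining $\log|R_k(f(X))|\leq\sum_{i=1}^n\alpha_i\log|R_k(f_i(X))|$, and apply the asymptotic formula \eqref{equa:Acard} to each Ruzsa set. This yields
\[
k\,\mathbf{H}(f(X))+O(\log k)\leq\sum_{i=1}^n\alpha_i\bigl(k\,\mathbf{H}(f_i(X))+O(\log k)\bigr).
\]
Because $n$ and the $\alpha_i$ are fixed, the accumulated error on the right remains $O(\log k)$, irrespective of the signs of the $\alpha_i$. Dividing through by $k$ and letting $k\to\infty$ along $\mathit{Suit}(X)$ — which is legitimate since $\mathit{Suit}(X)$ contains every multiple of $\operatorname{lcm}(r_1,\ldots,r_n)$ and is therefore unbounded — the terms $O(\log k)/k$ tend to zero, leaving the desired inequality $\mathbf{H}(f(X))\leq\sum_{i=1}^n\alpha_i\mathbf{H}(f_i(X))$.

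Once \cref{prop:Ruzsadevice} is in place the argument is essentially a dictionary lookup, so I do not expect a serious obstacle. The only point requiring care is the uniformity of the error terms: I must ensure that the implicit constant in the $O(\log k)$ of \eqref{equa:Acard} is controlled independently of $k$ along the sequence (which it is, as it comes from Stirling's formula applied with a fixed number of parts), so that it is genuinely negligible after division by $k$. A secondary subtlety, worth a sentence, is that the limit is taken only over the sparse but unbounded set $\mathit{Suit}(X)$ rather than over all integers; this is harmless, since neither side of the target inequality depends on $k$.
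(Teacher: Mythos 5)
Your proposal is correct and follows essentially the same route as the paper: invoke \cref{prop:Ruzsadevice} to identify $f^k(R_k(X))$ and $f_i^k(R_k(X))$ with the Ruzsa sets of the image variables, apply \eqref{equa:Acard}, divide by $k$ and let $k\to\infty$ along $\mathit{Suit}(X)$. The extra remarks you make (suitability of $k$ for the images, uniformity of the $O(\log k)$ terms, unboundedness of $\mathit{Suit}(X)$) are points the paper leaves implicit, and they check out.
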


\begin{proof} 
For each $k\in\mathit{Suit}(X)$, build the set $R_{k}(X)\subseteq\mathcal{X}^{k}$, and consider $f^{k}(R_{k}(X))\subseteq f(\mathcal{X})^{k}$ and $f_i^{k}(R_{k}(X))\subseteq f_i(\mathcal{X})^{k}$ for each $i\in[n]$.
By \cref{prop:Ruzsadevice}, these sets are the Ruzsa sets of $f(X)$ and $f_i(X)$ for each $i\in[n]$, respectively.

By the hypothesis of the statement and \eqref{equa:Acard}, for every suitable $k$ we have that
\[
\mathbf{H}(f(X))+O\left(\frac{\log k}{k}\right)\leq\sum_{i=1}^n\alpha_i \mathbf{H}(f_i(X))+O\left(\frac{\log k}{k}\right).
\]
The conclusion follows by letting $k$ tend to infinity.
\end{proof}

A standard limit  procedure extends \cref{lema:ruzdevice} to general discrete random variables. 

\begin{lemma}\label{lema:ruzdevice2} 
If the hypothesis of \cref{lema:ruzdevice} hold for every random variable taking a finite number of values with rational probabilities, then its conclusion also holds for any discrete random variable\/ $X$ such that the entropies\/ ${\mathbf H}(f_i(X))$ are all finite.
\end{lemma}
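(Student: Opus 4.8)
\textbf{The plan} is to prove the inequality for a general discrete $X$ by approximating it with a sequence of finite-support, rational-probability random variables to which the conclusion of \cref{lema:ruzdevice} already applies, and then passing to the limit. Throughout, write $x_1,x_2,\dots$ for the values of $X$ and $p_j=\Pr(X=x_j)$, and recall that by hypothesis the entropy inequality $\mathbf{H}(f(X'))\leq\sum_{i=1}^n\alpha_i\mathbf{H}(f_i(X'))$ holds for every finite-support, rational-probability $\mathcal X$-random variable $X'$.

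First I would build the approximations in two stages. Stage one (truncation): fix a base point, say $x_1$, and let $X^{(m)}$ agree with $X$ on the event $\{X\in\{x_1,\dots,x_m\}\}$ and equal $x_1$ on its complement; writing $t_m\coloneqq\sum_{j>m}p_j\to 0$, this collapses the tail mass $t_m$ onto $x_1$ and produces a finitely supported random variable. Stage two (rationalisation): for each fixed $m$ the distribution of $X^{(m)}$ is supported on finitely many points, so its probabilities can be approximated arbitrarily well by rationals summing to one; since the push-forward distributions of $f,f_1,\dots,f_n$ depend continuously (indeed linearly) on these probabilities, and the entropy of a finitely supported distribution is a continuous function of its probabilities, I can choose a rational-probability $X_m$ whose images under $f$ and each $f_i$ are as close as desired, both in distribution and in entropy, to those of $X^{(m)}$. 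A diagonal choice then yields a single sequence $X_m$ of finite-support, rational-probability random variables with $f(X_m)\to f(X)$ pointwise and with the entropy convergence described next.

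The heart of the argument is to show $\mathbf{H}(f_i(X_m))\to\mathbf{H}(f_i(X))$ for each $i$, using the finiteness of $\mathbf{H}(f_i(X))$; by the previous paragraph it suffices to treat $X^{(m)}$ in place of $X_m$. On one hand, $f_i(X^{(m)})\to f_i(X)$ pointwise, so lower semicontinuity of the entropy (each summand $-p\log p$ is continuous, hence the entropy is a supremum of its partial sums) gives $\mathbf{H}(f_i(X))\leq\liminf_m\mathbf{H}(f_i(X^{(m)}))$. On the other hand, conditioning on the indicator $E\coloneqq\mathbf{1}[X\notin\{x_1,\dots,x_m\}]$ (so $\Pr(E=1)=t_m$) and using that $f_i(X^{(m)})$ is constant on $\{E=1\}$ yields $\mathbf{H}(f_i(X^{(m)}))\leq\mathbf{H}(E)+\mathbf{H}(f_i(X)\mid E)\leq\mathbf{H}(E)+\mathbf{H}(f_i(X))$; since $\mathbf{H}(E)\to 0$ this gives $\limsup_m\mathbf{H}(f_i(X^{(m)}))\leq\mathbf{H}(f_i(X))$, and the two bounds combine to the desired convergence. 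This is where finiteness of $\mathbf{H}(f_i(X))$ is essential, and it is the step I expect to be the main obstacle: entropy is only lower semicontinuous on distributions of unbounded support, so one genuinely needs the tail control afforded by finiteness (equivalently, the vanishing of the tail of the convergent series $\sum_j(-p_j\log p_j)$) to upgrade semicontinuity to convergence.

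Finally I would assemble the limit. For each $m$ the hypothesis gives $\mathbf{H}(f(X_m))\leq\sum_{i=1}^n\alpha_i\mathbf{H}(f_i(X_m))$. Applying lower semicontinuity to the left-hand side and the convergence just established to the right-hand side,
\[\mathbf{H}(f(X))\leq\liminf_m\mathbf{H}(f(X_m))\leq\liminf_m\sum_{i=1}^n\alpha_i\mathbf{H}(f_i(X_m))=\sum_{i=1}^n\alpha_i\mathbf{H}(f_i(X)),\]
where the last equality holds because a finite linear combination of convergent sequences converges. In particular the chain forces $\mathbf{H}(f(X))$ to be finite, so no separate finiteness hypothesis on $f$ is needed, completing the proof.
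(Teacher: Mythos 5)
Your proposal is correct and follows essentially the same route as the paper: approximate $X$ by finite-support, rational-probability random variables, apply \cref{lema:ruzdevice} to each, and pass to the limit using convergence of the entropies. The only difference is one of detail: the paper simply asserts that the entropies converge, whereas you actually justify this (lower semicontinuity plus the conditioning bound $\mathbf{H}(f_i(X^{(m)}))\leq\mathbf{H}(E)+\mathbf{H}(f_i(X))$), which is where the finiteness hypothesis genuinely enters.
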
                                                                                                                                                      

\begin{proof} 
We can write $X$ as the limit of a sequence $X_i$ of random variables taking a finite number of values with rational probabilities. 
By \cref{lema:ruzdevice}, for each $i>0$ we have that $\mathbf{H}(f(X_i))\leq\sum_{j=1}^n\alpha_j\mathbf{H}(f_j(X_i))$.
As the discrete random variables  $X_i$ converge to $X$ in distribution, the corresponding entropies also converge and 
\[\mathbf{H}(f(X))=\lim_{i\to\infty}\mathbf{H}(f(X_i))\leq\lim_{i\to\infty}\sum_{j=1}^n\alpha_j\mathbf{H}(f_j(X_i))=\sum_{j=1}^n\alpha_j\mathbf{H}(f_j(X)).\qedhere\]
\end{proof}

Let $\mathfrak{X}$ be the set of all random variables taking finitely many values in $\mathcal{X}$, each of them with rational probability. 
By \cref{lema:ruzdevice2} we can restrict the proofs of our statements to random variables in $\mathfrak{X}$. 
We will use this fact in all the proofs.

Note that \cref{lema:ruzdevice2} is in fact stronger than \cref{teor:cardtoentro}, in the sense that the inequalities \eqref{equa:lema4cond} in the former are only required to hold for Ruzsa sets.  
However, in the applications we will usually have the more restrictive conditions.
The conditions in \cref{teor:cardtoentro} trivially imply those imposed in \cref{lema:ruzdevice2}.

Finally, for the sake of completeness, we provide a proof for \cref{teor:entrotocard}.

\begin{proof}[Proof of \cref{teor:entrotocard}]
For the proof one needs to define an appropriate random variable $X$.
Consider $f(A)$ and, for each $b\in f(A)$, choose a unique representative $a^*\in f^{-1}(b)$ of its preimage.
Let the set of these representatives be $A^*$, so that $f(A^*)=f(A)$.
Define a random variable $X$ having probability $\frac{1}{|f(A)|}$ of taking each value in $A^*$, and zero probability otherwise.
Thus $f(X)$ is uniformly distributed over $f(A)$. By \eqref{equa:jensen},
\begin{equation}\label{equa:entrotocard}
\log|f(A)|=\mathbf{H}(f(X))\leq\sum_{i=1}^n\alpha_i\mathbf{H}(f_i(X))\leq\sum_{i=1}^n\alpha_i\log|f_i(A)|\text{,}
\end{equation}
as it is clear that $f_i(A^*)\subseteq f_i(A)$ for all $i\in\{1,\ldots,n\}$.
\end{proof}

The reason that the numbers $\alpha_1,\ldots,\alpha_n$ have to be positive in \cref{teor:entrotocard} is that the inequality in \eqref{equa:entrotocard} is not guaranteed to hold otherwise.
However, the proofs of \cref{lema:ruzdevice,lema:ruzdevice2} also hold when these values are negative.
The fact that we have negative coefficients is what prevents us from directly proving many sumset versions of entropic results.
We observe that this same problem extends to the general use of Ruzsa's device.\COMMENT{Do we have any example where negative coefficients fail? }

There is an additional reason which prevents from a straight application of   \cref{teor:entrotocard}. Some entropy inequalities hold under independence constraints; this may  render \cref{teor:entrotocard} ineffective, as the proof relies strongly on some non independent random variables.
Likewise, when using \cref{lema:ruzdevice,lema:ruzdevice2}, there are   applications in which set cardinality inequalities for the Ruzsa sets of random variables can only hold (or are only known to hold) when the random variables involved are independent.
In these cases, an equivalence theorem using this method may not be possible.

\section{Projections and entropy}\label{section35}

As we already mentioned in the Introduction, Ruzsa proved that the Loomis and Whitney inequality and Han's inequality are, in fact, equivalent.
\citet{BalisterBollobas} generalised this result.
In order to state it, we must introduce some notation.
Let $B_1,\ldots,B_n$ be arbitrary sets, and let $\mathcal{X}=B_1\times\ldots\times B_n$.
Given a set $A\subseteq\mathcal{X}$, we denote its projection to the $i$-th coordinate by $A_i$ and, in general, its projection to coordinates indexed by $S\subseteq[n]$ as $A_S$.
In particular, $A=A_{[n]}$.
The same notation holds for random variables $X=(X_1,\ldots ,X_n)$ taking values in $\mathcal{X}$: for each subset $S\subset [n]$ we denote $X_S\coloneqq(X_i: i\in S)$.

A \textit{$k$-cover} of $[n]$ is a multiset $\mathcal{S}$ of subsets of $[n]$ such that each $i\in[n]$ appears in at least $k$ members of $\mathcal{S}$.
If each $i\in[n]$ appears in exactly $k$ members of $\mathcal{S}$, we say that the $k$-cover is \textit{uniform}.
Balister and Bollob\'as provided the following equivalence.

\begin{theorem}[\cite{BalisterBollobas}]
Let $n\geq2$, $B_1,\ldots,B_n$ be arbitrary finite sets, and let $\mathcal{X}\coloneqq B_1\times\ldots\times B_n$. For every uniform $k$-cover $\mathcal{S}$ of $[n]$,
the following two statements hold and are equivalent:
\begin{enumerate}[label=(\roman*)]
\item\label{item1BB} for any set $A\subseteq\mathcal{X}$,
 \[|A|^k\leq\prod_{S\in\mathcal{S}}|A_S|.\]
\item\label{item2BB}  for any random variable $X$ taking values in $\mathcal{X}$, 
 \[k\mathbf{H}(X)\leq\sum_{S\in\mathcal{S}}\mathbf{H}(X_S).\]
\end{enumerate}
\end{theorem}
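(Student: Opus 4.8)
The plan is to read both implications of the equivalence off \cref{teor:cardtoentro} and \cref{teor:entrotocard}, instantiated with the identity map $f=\mathrm{id}_{\mathcal X}$, with the coordinate projections $\pi_S\colon\mathcal X\to\prod_{i\in S}B_i$ for $S\in\mathcal S$ (so that $\pi_S(X)=X_S$ and $\pi_S(A)=A_S$) in the role of the functions $f_i$, and with all weights equal to $\alpha_S=1/k>0$. With these choices the cardinality inequality of the lemmas becomes $|A|\le\prod_{S\in\mathcal S}|A_S|^{1/k}$ and the entropy inequality becomes $\mathbf{H}(X)\le\frac1k\sum_{S\in\mathcal S}\mathbf{H}(X_S)$, which become exactly \ref{item1BB} and \ref{item2BB} after raising to the $k$-th power. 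Since the $B_i$ are finite every entropy is finite, so the finiteness hypothesis of \cref{teor:cardtoentro} is automatic. That the statements hold at all will then follow from the equivalence together with the fact that \ref{item2BB} is Shearer's inequality specialised to the $k$-cover $\mathcal S$.

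For \ref{item2BB}$\Rightarrow$\ref{item1BB} I would simply apply \cref{teor:entrotocard}. Its hypotheses are that the weights be positive, which holds since $\alpha_S=1/k$, and that the entropy inequality hold for every random variable supported on a finite subset of $\mathcal X$, which is \ref{item2BB} after dividing by $k$; the conclusion $|A|\le\prod_{S\in\mathcal S}|A_S|^{1/k}$ is then \ref{item1BB}. This direction stays entirely on $\mathcal X$ and never passes to cartesian powers, which is precisely what makes it the easy one.

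The implication \ref{item1BB}$\Rightarrow$\ref{item2BB} carries the real content, because \cref{teor:cardtoentro} demands the cardinality inequality not on $\mathcal X$ but on every power $\mathcal X^{\ell}$. The step I expect to need the most care is verifying that the uniform cover structure is preserved under this passage. Indexing the coordinates of $\mathcal X^{\ell}=(B_1\times\dots\times B_n)^{\ell}$ by $[n]\times[\ell]$, so that $\mathcal X^{\ell}=\prod_{(i,t)\in[n]\times[\ell]}B_i$, the induced map $\pi_S^{\ell}$ is exactly the projection onto the coordinate block $S\times[\ell]$, that is $\pi_S^{\ell}(A)=A_{S\times[\ell]}$. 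The multiset $\mathcal S'\coloneqq\{S\times[\ell]:S\in\mathcal S\}$ is a uniform $k$-cover of $[n]\times[\ell]$, since $(i,t)\in S\times[\ell]$ precisely when $i\in S$ and each $i$ lies in exactly $k$ members of $\mathcal S$. Reading \ref{item1BB} as the general assertion valid for every uniform $k$-cover and applying it over $\mathcal X^{\ell}$ with the cover $\mathcal S'$ yields $|A|^k\le\prod_{S\in\mathcal S}|\pi_S^{\ell}(A)|$ for all finite $A\subseteq\mathcal X^{\ell}$. This is exactly the hypothesis of \cref{teor:cardtoentro}, which therefore delivers $\mathbf{H}(X)\le\frac1k\sum_{S\in\mathcal S}\mathbf{H}(X_S)$, i.e.\ \ref{item2BB}. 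Beyond this cover-lifting observation the argument is a mechanical substitution into the two lemmas.
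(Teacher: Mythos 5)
Your proposal is correct and follows essentially the same route as the paper, which proves the more general \cref{thm:projection} in exactly this way: take $f$ to be the identity and the $f_S$ to be the coordinate projections, apply \cref{teor:entrotocard} for the direction (ii)$\Rightarrow$(i), and use the Ruzsa device (\cref{prop:Ruzsadevice,lema:ruzdevice,lema:ruzdevice2}, packaged as \cref{teor:cardtoentro}) for (i)$\Rightarrow$(ii). The one cosmetic difference is your lift of (i) to $\mathcal{X}^{\ell}$: instead of invoking the uniform-cover statement for the new cover $\mathcal{S}'=\{S\times[\ell]:S\in\mathcal{S}\}$ of $[n]\times[\ell]$ --- which formally appeals to (i) for a different cover than the fixed $\mathcal{S}$ --- you can regroup $\mathcal{X}^{\ell}\cong\prod_{i=1}^{n}B_i^{\ell}$ so that $\pi_S^{\ell}$ is the $S$-projection of this product, and the required inequality is then an instance of (i) for the same $n$ and the same $\mathcal{S}$ with $B_i$ replaced by the finite sets $B_i^{\ell}$, keeping the implication within the scope of the single statement being assumed.
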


We recall that  the first result is the well-known uniform cover inequality~\cite{unifcover}, whereas the second is the famous Shearer's inequality~\cite{Shearer}. What Balister and Bollob\'as proved is their equivalence.

In this same spirit, we show a further generalisation of this equivalence result using an entropic inequality of \citet{Madimanjointentropies}.
In order to state this result, we introduce some further notation.
Given a multiset $\mathcal{S}$ of subsets of $[n]$, a function $\alpha\colon\mathcal{S}\to\mathbb{R}_+$  is called a \textit{fractional cover} if for each $i\in[n]$ we have that
\[\sum_{S\in\mathcal{S}:i\in S}\alpha(S)\geq1\text{.}\]

As an analogy with random variables, given sets $A\subseteq\mathcal{X}$, $S\subseteq[n]$ and $y\in A_S$, we define the set $A\mid A_{S}=y$ (and read $A$ \textit{conditioned to} $A_S=y$) as the subset of $A$ such that the coordinates indexed by $S$ take the value $y$.
We then define an average size of the conditioned set by
\[\big|A\mid A_S\big|\coloneqq\prod_{y\in A_S}\big|A\mid A_S=y\big|^{p(y)},\]
where $p(y)$ is the probability that $\pi_S(x)=y$ when choosing a point $x\in A$ uniformly at random.

Given two sets $S,T\subseteq[n]$, we write $A_T\mid A_S$ to mean $A_T\mid A_{S\cap T}$, with a slight abuse of notation. The average size of $A_T\mid A_S$ is then
\[
\big| A_T\mid A_S\big|=\prod_{y\in A_S} \big| A_T\mid A_S=y\big|^{p(y)}.
\]
By convention, when $S=\varnothing$ we set $|A_T\mid A_{\varnothing}|=|A_T|$.

For a set $S\subseteq[n]$ with minimal element $a\geq1$,
we define $S_*\coloneqq[a-1]$. When $a=1$, we understand that $S_*$ is empty.

\begin{theorem}\label{thm:projection}
Let\/ $n\geq2$,\/ $B_1,\ldots,B_n$ be arbitrary finite sets, and let\/ $\mathcal{X}\coloneqq B_1\times\cdots\times B_n$. 
Let $\mathcal{S}$ be a multiset of\/ $[n]$. 
For any fractional covering $\alpha\colon{\mathcal{S}}\to \R_+$ the following statements hold and are equivalent:
\begin{enumerate}[label=(\roman*)]
\item\label{item1newBB} for any set\/ $A\subseteq\mathcal{X}$, 
 \[|A|\leq\prod_{S\in\mathcal{S}}\big|A_S\mid A_{S_*}\big|^{\alpha_S}.\]
\item\label{item2newBB}  for any random variable\/ $X=(X_1,\ldots,X_n)$ taking values in\/ $\mathcal{X}$ such that\/ $\mathbf{H}(X)$ is finite,
 \[\mathbf{H}(X)\leq\sum_{S\in\mathcal{S}}\alpha_S\mathbf{H}(X_S\mid X_{S_*}).\]
\end{enumerate}
\end{theorem}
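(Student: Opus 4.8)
The plan is to establish the two implications \ref{item1newBB}$\Rightarrow$\ref{item2newBB} and \ref{item2newBB}$\Rightarrow$\ref{item1newBB} and, separately, to prove \ref{item2newBB} unconditionally; together these show that both statements hold and are equivalent. Throughout, write $\widehat{S}\coloneqq S\cup S_*$. Since $S_*=[a-1]$ with $a=\min S$, the sets $S$ and $S_*$ are disjoint, so that $\mathbf{H}(X_S\mid X_{S_*})=\mathbf{H}(X_{\widehat{S}})-\mathbf{H}(X_{S_*})$ and the $\widehat{S}$-projection of a vector splits as the pair of its $S_*$- and $S$-projections.

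To see that \ref{item2newBB} holds, I would apply the chain rule twice. Writing $\mathbf{H}(X)=\sum_{i=1}^n\mathbf{H}(X_i\mid X_{[i-1]})$ and, for each $S\in\mathcal{S}$, $\mathbf{H}(X_S\mid X_{S_*})=\sum_{i\in S}\mathbf{H}(X_i\mid X_{S_*},X_{S\cap[i-1]})$, and observing that $S_*\cup(S\cap[i-1])\subseteq[i-1]$, the fact that conditioning on more variables does not increase entropy gives $\mathbf{H}(X_S\mid X_{S_*})\geq\sum_{i\in S}\mathbf{H}(X_i\mid X_{[i-1]})$. Weighting by $\alpha_S$, swapping the order of summation, and using $\sum_{S\ni i}\alpha_S\geq1$ together with $\mathbf{H}(X_i\mid X_{[i-1]})\geq0$ then yields \ref{item2newBB}; this is the entropy inequality of \cite{Madimanjointentropies}.

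For \ref{item2newBB}$\Rightarrow$\ref{item1newBB} I would mimic the proof of \cref{teor:entrotocard}. Given a finite $A\subseteq\mathcal{X}$, let $X$ be uniform on $A$, so $\mathbf{H}(X)=\log|A|$ by \eqref{equa:jensen}. For each $y\in A_{S_*}$, the conditioned variable $X_S\mid X_{S_*}=y$ is supported on $A_S\mid A_{S_*}=y$, so \eqref{equa:jensen} gives $\mathbf{H}(X_S\mid X_{S_*}=y)\leq\log|A_S\mid A_{S_*}=y|$; averaging over $y$ with weights $p(y)=\Pr(X_{S_*}=y)$ produces $\mathbf{H}(X_S\mid X_{S_*})\leq\log|A_S\mid A_{S_*}|$. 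As every $\alpha_S$ is positive, substituting these bounds into \ref{item2newBB} gives $\log|A|\leq\sum_{S\in\mathcal{S}}\alpha_S\log|A_S\mid A_{S_*}|$, which is \ref{item1newBB}.

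The implication \ref{item1newBB}$\Rightarrow$\ref{item2newBB} is where the Ruzsa device enters, and I expect the main obstacle to be that \ref{item1newBB} features the \emph{geometric-mean} conditional sizes $|A_S\mid A_{S_*}|$ rather than ordinary cardinalities. The key point is that on a Ruzsa set the two agree. Indeed, for $A=R_k(X)$ we have $A_{S_*}=R_k(X_{S_*})$ by \cref{prop:Ruzsadevice}, and the action of $\mathrm{Sym}(k)$ permuting the $k$ coordinates preserves $A$ and is transitive on $A_{S_*}$, since all vectors of $R_k(X_{S_*})$ share the same frequency profile. Equivariance of this action then forces every $S_*$-fibre of $A$ to have the same size and every fibre to have the same $S$-projection size $N$; hence $p(y)$ is uniform, $|A_S\mid A_{S_*}|=N$, and, because $|A_{\widehat{S}}|=\sum_{y\in A_{S_*}}|A_S\mid A_{S_*}=y|=|A_{S_*}|\,N$, we obtain $|A_S\mid A_{S_*}|=|A_{\widehat{S}}|/|A_{S_*}|$. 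Granting this, I would reduce to $X\in\mathfrak{X}$ via \cref{lema:ruzdevice2}, take $f=\mathrm{id}$ and, for each $S$, the functions $\pi_{\widehat{S}}$ with coefficient $\alpha_S$ and $\pi_{S_*}$ with coefficient $-\alpha_S$. With these choices, condition \eqref{equa:lema4cond} reads $|R_k(X)|\leq\prod_{S\in\mathcal{S}}\bigl(|R_k(X)_{\widehat{S}}|/|R_k(X)_{S_*}|\bigr)^{\alpha_S}$, which by the identity above is exactly \ref{item1newBB} for $A=R_k(X)$. Applying the form of \cref{lema:ruzdevice} that allows negative coefficients---justified by the remark following the proof of \cref{teor:entrotocard}---then gives $\mathbf{H}(X)\leq\sum_{S\in\mathcal{S}}\alpha_S\bigl(\mathbf{H}(X_{\widehat{S}})-\mathbf{H}(X_{S_*})\bigr)=\sum_{S\in\mathcal{S}}\alpha_S\mathbf{H}(X_S\mid X_{S_*})$, which is \ref{item2newBB}. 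The symmetry identity for Ruzsa sets is the crux; everything else is bookkeeping with \eqref{equa:Acard} and the chain rule.
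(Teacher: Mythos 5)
Your proposal is correct, and in the two substantive directions it is actually more careful than the paper's own argument. The paper cites \citet{Madimanjointentropies} for \ref{item2newBB}, whereas you reprove it via the two chain-rule expansions and the fractional-cover condition; that is the standard proof and is fine. The real divergence is in how the conditional quantities $|A_S\mid A_{S_*}|$ and $\mathbf{H}(X_S\mid X_{S_*})$ are handled. For \ref{item2newBB}$\Rightarrow$\ref{item1newBB} the paper simply invokes \cref{teor:entrotocard} with $f_S=\pi_S$, which as stated only yields the unconditional bound $|A|\leq\prod_S|A_S|^{\alpha_S}$; you instead redo the uniform-variable argument fibre by fibre, using $\mathbf{H}(X_S\mid X_{S_*}=y)\leq\log|A_S\mid A_{S_*}=y|$ and averaging, which is exactly the adaptation needed to reach the stated \ref{item1newBB}. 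For \ref{item1newBB}$\Rightarrow$\ref{item2newBB} the paper again works with plain projections and plain cardinalities $|f_S^k(R_k(X))|=|R_k(X_S)|$, so \cref{lema:ruzdevice} as applied there delivers only $\mathbf{H}(X)\leq\sum_S\alpha_S\mathbf{H}(X_S)$, which is weaker than \ref{item2newBB}; your route closes this gap. The observation that $\mathrm{Sym}(k)$ acts on $R_k(X)$, is transitive on $R_k(X_{S_*})$, and is equivariant for the block projections---so that all fibres have equal size and $|R_k(X)_S\mid R_k(X)_{S_*}|=|R_k(X)_{S\cup S_*}|/|R_k(X)_{S_*}|$---is the missing ingredient, and feeding the pairs $(\pi_{S\cup S_*},\alpha_S)$, $(\pi_{S_*},-\alpha_S)$ into the signed version of \cref{lema:ruzdevice} (licensed by the remark after \cref{teor:entrotocard}) then recovers the conditional entropies via $\mathbf{H}(X_{S\cup S_*})-\mathbf{H}(X_{S_*})=\mathbf{H}(X_S\mid X_{S_*})$. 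In short: same framework (Ruzsa sets, \cref{prop:Ruzsadevice}, \cref{lema:ruzdevice,lema:ruzdevice2}), but your treatment of the conditioning is a genuine strengthening of the written proof rather than a restatement of it.
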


\begin{proof}
Statement \ref{item2newBB} is a  result of \citet[Theorem I']{Madimanjointentropies}.
Statement \ref{item1newBB} follows from \ref{item2newBB} by taking $f$ to be the identity and $f_S$ to be projections onto the coordinates indexed by $S\in\mathcal{S}$ in \cref{teor:entrotocard}.

For the converse implication, choose $f$ to be the identity and let $f_S$ be the projections onto the coordinates in $S$.
Assume first that $X$ takes finitely many values, each with a rational probability. 
For each $X$-suitable $k$, $f^k_S(R_k(X))=R_k(X_S)$ by \cref{prop:Ruzsadevice}. 
By statement~\ref{item1newBB}, $|f^{k}(R_{k}(X))|\leq\prod_{S\in\mathcal{S}}|f_S^{k}(R_{k}(X))|^{\alpha_S}$ holds, so \cref{lema:ruzdevice} directly yields the result.
The case when $X$ is any discrete random variable follows by \cref{lema:ruzdevice2}.
\end{proof}

We note that statement \ref{item1newBB} in \cref{thm:projection} generalizes and improves previous bounds on the sizes of sets based on the sizes of their projections. The result can be derived from the more general results by \citet{Madimanjointentropies}.

\section*{Acknowledgements}

We would like to thank Mark Butler, Andrea Pachera and Jack Saunders for some very helpful discussions.
We are also indebted to Jiange Li for finding a mistake in a previous version of this note.


\printbibliography

\end{document}